\numberwithin{figure}{section}
\numberwithin{figure}{section}
\newtheorem{thm}{Theorem}[section]
\newtheorem{lem}[thm]{Lemma}
\newtheorem{prop}[thm]{Proposition}
\theoremstyle{definition}
\newtheorem{defn}[thm]{Definition}
\numberwithin{equation}{section}
\def\Chi#1{\hbox{{\large $\chi$}{\Large $_{_{#1}}$}}}
\begin{document}
\def\R{\mathbb R}
\def\H{\mathcal H}
\def\S{\mathbb S}
\def\e{\varepsilon}
\def\pa{\partial}
\def\de{\partial}
\def\ds{\displaystyle}
\def\restrict#1{\raise-.5ex\hbox{\ensuremath|}_{#1}}

\title[\title{ A quantitative Weinstock inequality }]{A quantitative Weinstock inequality}

\author[N. Gavitone, D. A. La Manna, G. Paoli, L. Trani]{
  Nunzia Gavitone, Domenico Angelo La Manna, Gloria Paoli, Leonardo Trani}
 \address{
Nunzia Gavitone \\
Universit\`a degli studi di Napoli ``Federico II''\\
Dipartimento di Ma\-te\-ma\-ti\-ca e Applicazioni ``R. Caccioppoli''\\
Complesso di Monte Sant'Angelo, Via Cintia,
80126 Napoli, Italia.
}
\email{nunzia.gavitone@unina.it}
 \address{Domenico Angelo La Manna\\
 Universit\`a degli Studi di Cassino
e del Lazio Meridionale \\
Dipartimento di Ingegneria Elettrica e dell'Informazione \\
Via G. Di Biasio 43 \\
03043 Cassino, Italia}
\email{domenicolamanna@hotmail.it}
       \address{ Gloria Paoli \\ Universit\`a degli studi di Napoli Federico II, Dipartimento di Matematica e Applicazioni ``R. Caccioppoli'', Via Cintia, Monte S. Angelo - 80126 Napoli, Italia}
      \email{gloria.paoli@unina.it}
       \address{Leonardo Trani, Universit\`a degli studi di Napoli Federico II, Dipartimento di Matematica e Applicazioni ``R. Caccioppoli'', Via Cintia, Monte S. Angelo - 80126 Napoli, Italia}
      \email{leonardo.trani@unina.it}

\maketitle

\begin{abstract}
This  paper is devoted to the study of a quantitative Weinstock inequality in higher dimension for the first non trivial Steklov  eigenvalue of the  Laplace operator for convex sets. The key role is played by a quantitative  isoperimetric inequality which involves  the boundary momentum, the volume and the perimeter of a convex open set of $\R^n$, $n \ge 2$.  
\vspace{.2cm}

\textsc{Keywords:} Steklov eigenvalue, isoperimetric inequality, convex sets.\end{abstract}

\textsc{Mathematics Subject Classifications (2010):} 35P15,  35B35
\section{Introduction}
Let $\Omega \subset \R^n$, with $n \ge 2$,  be a bounded, connected, open set  with Lipschitz boundary. In this paper we consider the following Steklov eigenvalue problem for the Laplace operator:
\begin{equation}\label{stekintro}
\begin{cases}
\Delta u=0 &\mbox{in}\ \Omega,\\[.2cm]
\frac{\de u}{\de \nu}=\sigma u&\mbox{on}\ \de\Omega,
\end{cases}
\end{equation}
where  by $\partial u/\partial \nu$ we denote  the outer normal derivative to $u$ on $\partial\Omega$.
It is well-known (see for instance \cite{b,h,bdr}) that the spectrum is discrete; as a consequence, we have that  there exists a sequence of eigenvalues, $0=\sigma_0(\Omega)<\sigma_1(\Omega)\leq\sigma_2(\Omega)\leq\dots\nearrow+\infty$, called the Steklov eigenvalues of $\Omega$. 
In particular, the first non trivial Steklov eigenvalue of $\Omega$ has the following variational characterization:
\begin{equation}\label{first_stec}
\sigma_1(\Omega)=\min\left\{\dfrac{\ds\int_{\Omega}|\nabla v|^2\;dx}{\ds\int_{\de\Omega}v^2\;d \mathcal{H}^{n-1}} \; :\; v\in H^1(\Omega)\setminus\{0\},\;\int_{\de\Omega}v\;d \mathcal{H}^{n-1}=0    \right\},
\end{equation}
where $\mathcal{H}^{n-1}$ denotes the $(n-1)-$dimensional Hausdorff measure in $\mathbb{R}^n$.
If we take $\Omega=B_{R}(x)$, where $B_R(x)$ is a ball  of radius $R$ cemtered at the point $x$, then
\begin{equation}
\label{ball}
\sigma_1(B_{R}(x))=\frac{1}{R}.
\end{equation}
Moreover, we know that $\sigma_1(B_{R}(x))$ has multiplicity $n$ and the corresponding eigenfunctions are $ u_i (x) = x_{i-1}$, with $i=2, \ldots,n+1$.
In \cite{We,We2}  the author considers the  problem of maximizing $\sigma_1(\Omega)$  in the plane, keeping fixed the perimeter of $\Omega$. More precisely, if $\Omega \subset \R^2$ is a simply connected and open set,  the following inequality, known as Weinstock inequality, is proved 
\begin{equation}
\label{wi}
\sigma_1(\Omega) P(\Omega) \le \sigma_1(B_R(x)) P(B_R(x)),
\end{equation}
where $P(\Omega)$ denotes the Euclidean perimeter of $\Omega$. Inequality \eqref{wi}  states that, among all  planar,  simply connected,  open sets  with prescribed perimeter, $\sigma_1(\Omega) $ is maximum for the disk.
In  \cite{bfnt}, the authors generalize the Weinstock inequality \eqref{wi} in any dimension, when restricting to the   class of convex sets. More precisely, if $\Omega\subset \R^n$ is an open, bounded, convex set, then
\begin{equation}
\label{wicri}
\sigma_1(\Omega) P(\Omega)^{\frac{1}{n-1}} \le \sigma_1(B_R(x)) P(B_R(x))^{\frac{1}{n-1}}
\end{equation}
and equality holds only if $\Omega$ is a ball.
In \cite{br} the author investigated  and solved the problem of maximizing $\sigma_{1}(\Omega)$ keeping the volume fixed. It is proved that 
\begin{equation}
\label{bwi}
\sigma_1(\Omega) V(\Omega)^{\frac 1 n} \le \sigma_1(B_R(x)) V(B_R(x))^{\frac 1 n},
\end{equation}
where $V(\Omega)$ denotes the Lebesgue measure of $\Omega$.
 Recently, in \cite{bdr},  a quantitative version of  inequality \eqref{bwi} has been   proved.

The aim of this paper is to prove a quantitative  version  of inequality \eqref{wicri}. Let $\omega_n$ be the measure of the $n$-dimensional unit ball in $\R^{n}$ and let  $d_\H$ be the Hausdorff distance (defined in \eqref{disth}). We consider the following asymmetry functional
\begin{equation}
\label{asyi}
\mathcal{A_\mathcal{H}}(\Omega)=\min_{x \in \R^{n}}\left\{\left(\frac{d_\H \left(\Omega,B_{R}(x)\right)}
{R}\right), \, P(B_{R}(x)) =P(\Omega)\right\},
\end{equation}
where $\Omega \subset \R^n$ is a bounded,  open, convex set.  We observe that $\mathcal{A_\mathcal{H}}(\Omega)$ is scaling invariant, hence
\[
\mathcal{A_\mathcal{H}}(\Omega)= \mathcal{A_\mathcal{H}}(F), 
\]
where $F$ is a convex set having the same perimeter of the unit ball, that is $P(F)=n\omega_n$.
Our main result is stated in the following theorem.
\begin{thm}
\label{main_w} Let $n \ge 2$.  There exist two costants $\bar\delta>0$ and  $C=C(n)>0$ such that for every $\Omega \subset \mathbb{R}^n$  bounded,  convex open  set with  $\sigma_{1}(B_{R}(x))\le(1+\bar\delta)\,\sigma_{1}(\Omega)$, where $B_{R}(x)$ is a ball with  $P(B_R(x))=P(\Omega)$, then \begin{equation}
\label{th_{intro}}
\dfrac{\sigma_{1}(B_{R}(x))-\sigma_{1}(\Omega)}{\sigma_{1}(\Omega)} \ge \begin{cases}
C \left( \displaystyle \mathcal{A_\mathcal{H}}(\Omega) \right)^{\frac{5}{2}} &\text{ if } n = 2
\\
C\,g\!\left(\left( \displaystyle \mathcal{A_\mathcal{H}}(\Omega) \right)^2\right)
 &\text{ if } n = 3\\
 C\left( \displaystyle \mathcal{A_\mathcal{H}}(\Omega) \right)^{\frac{n+1}{2}} &\text{ if } n \ge 4,
 \end{cases}
\end{equation}
where $g$ is the inverse function of $f(t)=t\log\left(\frac{1}{t}\right)$, for $0<t<e^{-1}$.
\end{thm}
The key point to prove Theorem \ref{main_w} is a quantitative version of a weighted isoperimetric inequality (see Theorem \ref{main2} for the precise statement).

\section{Notation and Preliminary results}

\subsection{Notation and some definitions}
Throughout the paper,  the unit ball centered at the origin will be denoted by $B$ and its boundary by $\S^{n-1}$; moreover, we will denote by $B_R$ the ball  centered at the origin of radius $R$ and by $B_R(x)$ the  ball centered at $x$ of  radius $R$.

Let $\Omega\subseteq\mathbb{R}^n$ be a bounded, open set and let $E\subseteq\R^{n}$ be a measurable set. For the sake of completeness, we recall here the definition of the perimeter of $E$ in $\Omega$:
\begin{equation*}
P(E;\Omega)=\sup\left\{  \int_E {\rm div} \varphi\:dx :\;\varphi\in C^{\infty}_c(\Omega;\mathbb{R}^n),\;||\varphi||_{\infty}\leq 1 \right\}.
\end{equation*}
The perimeter of $E$ in $\mathbb{R}^n$ will be denoted by $P(E)$ and, if $P(E)<\infty$, we say that $E$ is a set of finite perimeter.  Moreover, if $E$ has Lipschitz boundary, we have that \begin{equation}
\label{per}
 P(E)=\mathcal{H}^{n-1}(\de E),
 \end{equation}
where  $\mathcal{H}^{n-1}$ is the $(n-1)-$dimensional Hausdorff measure in $\mathbb{R}^n$.

We denote by 
\begin{equation}
\label{vol}
V(E)=\int_{E} dx
 \end{equation}
the volume of the measurable set  $E\subseteq\mathbb{R}^n$, i.e. its $n$-dimensional Lebesgue measure and, if $E$ has Lipschitz boundary, we denote by
\begin{equation}
\label{momento} 
 W(E)=\int_{\de E} |x|^2\; d\mathcal{H}^{n-1}
\end{equation} 
 the boundary momentum of $E$, where $|\cdot|$ is  the euclidean norm in $\mathbb{R}^n$.
We observe that $P(E)$, $W(E)$ and $V(E)$ have the following scaling properties, for $t>0$,
\begin{equation}
\label{res_prop}
P(tE)= t^{n-1}P(E)\qquad
V(tE))=t^n V(E)\qquad
W(tE)=t^{n+1} W(E).
\end{equation}
Finally, we recall the definition of  Hausdorff distance between two non-empty compact sets $E,F\subset\mathbb{R}^n$, that is (see for instance \cite{sch}):
\begin{equation}
\label{disth}
 d_{\mathcal H}(E,F)=\inf \left\{  \varepsilon>0  \; :\; E\subset F+B_{\varepsilon},\; F\subset E+B_{\varepsilon}\right\} .
 \end{equation}
 Note that, in the case $E$ and $F$ are  convex sets, we have  $d_\H(E,F)=d_\H (\pa E, \pa F)$ and the following rescaling property holds
 \[
 d_\H(tE,tF)=t\,d_\H (E,F), \quad t>0.
 \]
Let $\Omega \subset \R^n$ be  a bounded, open, convex set. We consider the following asymmetry functional related to $\Omega$:
\begin{equation}
\label{asy}
\mathcal{A_\mathcal{H}}(\Omega)=\min_{x \in \R^{n}}\left\{\left(\frac{d_\H \left(\Omega,B_{R}(x)\right)}
{R}\right), \,  P(\Omega)=P(B_{R}(x))\right\},
\end{equation}
 \begin{defn}
	Let $\Omega \subseteq \R^{n}$ be a bounded, open set, let $(E_j) \subset \R^{n}$ be a sequence of measurable sets and let $E\subset \R^{n}$ be a measurable set. We say that $(E_j)$ converges in measure in $\Omega$ to $E$, and we write $E_j\rightarrow E$, if $\chi_{E_j}\rightarrow\chi_E$ in $L^1(\Omega)$, or in other words, if $\lim_{j\to \infty}V(\left(E_j\Delta E\right)\cap\Omega)=0$.
\end{defn}
We recall also that the perimeter is lower semicontinous with respect to the local convergence in measure, that means, if the sequence of sets $\left(E_j\right)$ converges in measure in $\Omega$  to $E$, then
$$ P(E;\Omega)\leq \liminf_{j\rightarrow\infty}P(E_j;\Omega).$$
As a consequence of the Rellich-Kondrachov theorem,  the following compactness result holds; for a reference  see for instance \cite{afp}.
\begin{prop}
	Let $\Omega \subseteq \R^{n}$ be a bounded,  open set and  let $\left(E_j\right)$ be a sequence of measurable sets of $\mathbb{R}^n$, such that $\sup_j P(E_j;\Omega)<\infty$. Then, there exists  a subsequence $\left(E_{j_{k}}\right)$ converging in measure in $\Omega$ to a  set $E$, such  that $$ P(E;\Omega)\leq \liminf_{k\rightarrow\infty}P(E_{j_k};\Omega).$$
\end{prop}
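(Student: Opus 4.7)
The strategy is the standard reduction to BV compactness for characteristic functions. I would first reinterpret the perimeter bound in terms of the total variation: each $\chi_{E_j}$ belongs to $BV(\Omega)$ with $|D\chi_{E_j}|(\Omega)=P(E_j;\Omega)$, which is uniformly bounded by hypothesis. Since $\Omega$ is bounded, $\|\chi_{E_j}\|_{L^1(\Omega)}\le V(\Omega)<\infty$ uniformly as well, so the sequence $(\chi_{E_j})$ is bounded in $BV(\Omega)$.

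Next I would invoke the Rellich--Kondrachov-type compactness theorem in $BV(\Omega)$ (the reference \cite{afp} indicated in the paper is precisely the one needed): there exist a subsequence, still denoted $(\chi_{E_{j_k}})$ for brevity, and a function $u\in L^1(\Omega)$ with $\chi_{E_{j_k}}\to u$ in $L^1(\Omega)$. Passing to a further subsequence along which the convergence holds a.e.\ in $\Omega$, and using that each $\chi_{E_{j_k}}$ takes only the values $0$ and $1$, I would conclude that $u\in\{0,1\}$ a.e., so $u=\chi_E$ for some measurable $E\subseteq\Omega$ (which we then extend arbitrarily outside $\Omega$, say by the empty set). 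The resulting $L^1(\Omega)$-convergence $\chi_{E_{j_k}}\to\chi_E$ is exactly the convergence in measure in $\Omega$ required by the definition recalled in the excerpt.

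Finally, the perimeter estimate $P(E;\Omega)\le\liminf_{k\to\infty}P(E_{j_k};\Omega)$ is an immediate consequence of the lower semicontinuity of the perimeter with respect to local convergence in measure, which was stated right before the proposition, applied to the sequence $(E_{j_k})$ and its limit $E$.

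The only nontrivial ingredient is the BV compactness theorem used in the second step; everything else is essentially bookkeeping (the $L^1$ bound on $\Omega$, the a.e.\ extraction to identify the limit as a characteristic function, and the direct application of the lower semicontinuity statement). No quantitative estimate is required, so there is no real obstacle beyond quoting the compactness result from \cite{afp}.
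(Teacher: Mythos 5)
Your argument is correct and follows exactly the route the paper indicates: the paper does not spell out a proof but attributes the statement to the Rellich--Kondrachov theorem and refers to \cite{afp}, which is precisely the BV-compactness plus lower-semicontinuity argument you carried out. The bookkeeping steps you supply (uniform $L^1$ and total-variation bounds, passage to an a.e.\ convergent subsequence to identify the limit as a characteristic function, and invoking the previously stated lower semicontinuity) are the standard and correct way to fill in that reference.
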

Another useful property concerning the sets of finite perimeter is stated in the next approximation result.
\begin{prop}
	Let $\Omega \subseteq \R^{n}$ be a bounded,  open set and let $E$ be a set of finite perimeter in $\Omega$. Then, there exists a sequence of smooth, bounded open sets $\left(E_j\right)$ converging in measure in $\Omega$ and such that $\lim_{j\to \infty}P(E_j;\Omega) = P(E;\Omega)$. 
\end{prop}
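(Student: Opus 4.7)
The plan is to approximate $\chi_E$ by mollification and extract the required smooth bounded open sets as generic superlevel sets, using the coarea formula to transfer the total-variation estimate to the perimeter of the level sets and Sard's theorem to guarantee smoothness of the boundary. First I would reduce to the case where $E$ is bounded by intersecting it with a bounded smooth open set $U\supset\supset\Omega$; this does not alter $\chi_E$ inside $\Omega$ and so preserves both $P(E;\Omega)$ and the notion of convergence in measure in $\Omega$. Then I would set $u_\varepsilon:=\chi_E\ast\rho_\varepsilon\in C^\infty_c(\mathbb{R}^n)$ for a standard mollifier $\rho_\varepsilon$ supported in $B_\varepsilon$; standard BV properties of mollification give $u_\varepsilon\to\chi_E$ in $L^1(\Omega)$ together with
\[
\limsup_{\varepsilon\to 0}\int_\Omega|\nabla u_\varepsilon|\,dx\le P(E;\Omega).
\]

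Setting $\delta_\varepsilon:=\|u_\varepsilon-\chi_E\|_{L^1(\Omega)}^{1/2}\to 0$, the coarea formula applied to the smooth function $u_\varepsilon$ reads
\[
\int_0^1 P(\{u_\varepsilon>t\};\Omega)\,dt=\int_\Omega|\nabla u_\varepsilon|\,dx,
\]
so a mean-value estimate on the subinterval $(\delta_\varepsilon,1-\delta_\varepsilon)$ shows that the set of $t$ at which $P(\{u_\varepsilon>t\};\Omega)\le(1-2\delta_\varepsilon)^{-1}\int_\Omega|\nabla u_\varepsilon|\,dx$ has positive measure. Simultaneously, Sard's theorem applied to $u_\varepsilon\in C^\infty$ tells us that almost every such $t$ is a regular value, so that $\{u_\varepsilon>t\}$ is a bounded open set with smooth boundary. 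Pick $t_\varepsilon$ in the intersection of these two full-measure conditions inside $(\delta_\varepsilon,1-\delta_\varepsilon)$ and define $E_\varepsilon:=\{u_\varepsilon>t_\varepsilon\}$.

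Convergence in measure then follows from the Chebyshev-type bounds
\[
V\bigl((E_\varepsilon\setminus E)\cap\Omega\bigr)\le\frac{1}{t_\varepsilon}\int_{\Omega\setminus E}u_\varepsilon\,dx,\qquad V\bigl((E\setminus E_\varepsilon)\cap\Omega\bigr)\le\frac{1}{1-t_\varepsilon}\int_{E\cap\Omega}(1-u_\varepsilon)\,dx,
\]
whose sum is bounded by $\delta_\varepsilon^{-1}\|u_\varepsilon-\chi_E\|_{L^1(\Omega)}=\delta_\varepsilon\to 0$. For the perimeter, the choice of $t_\varepsilon$ yields $\limsup_{\varepsilon\to 0} P(E_\varepsilon;\Omega)\le P(E;\Omega)$, while the lower semicontinuity of the perimeter recalled in the excerpt gives $\liminf_{\varepsilon\to 0} P(E_\varepsilon;\Omega)\ge P(E;\Omega)$; together these yield $P(E_\varepsilon;\Omega)\to P(E;\Omega)$ along a sequence $\varepsilon_j\to 0$.

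The main obstacle is to coordinate three constraints on $t_\varepsilon$ simultaneously: it must be a regular value of $u_\varepsilon$ (so Sard produces smoothness), it must attain the coarea average without an irreducible multiplicative constant (so $\limsup P(E_\varepsilon;\Omega)$ matches $P(E;\Omega)$ sharply rather than up to a factor), and it must stay bounded away from $\{0,1\}$ (so the Chebyshev bounds force convergence in measure). A blunt mean-value argument over all of $(0,1)$ picks up an unwanted multiplicative constant in the perimeter bound; the diagonal choice of $\delta_\varepsilon$ tending to zero slower than the $L^1$ error $\|u_\varepsilon-\chi_E\|_{L^1(\Omega)}$ itself — here at the rate of its square root — is what reconciles all three constraints in a single passage to the limit.
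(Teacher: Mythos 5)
The paper states this proposition without proof, so I can only assess your argument on its own terms; it follows the standard mollification--coarea--Sard scheme, but there is a genuine gap at the very place where you rely on a "standard" fact.

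The step I cannot accept is the claim
\[
\limsup_{\varepsilon\to 0}\int_\Omega|\nabla u_\varepsilon|\,dx\le P(E;\Omega).
\]
Writing $\nabla u_\varepsilon = \rho_\varepsilon * D\chi_E$, the inequality one actually obtains is
\[
\int_\Omega|\nabla u_\varepsilon|\,dx \le \big(\rho_\varepsilon * |D\chi_E|\big)(\Omega) \le |D\chi_E|\big(\Omega + B_\varepsilon\big),
\]
whose $\limsup$ is $|D\chi_E|(\overline\Omega) = P(E;\Omega) + |D\chi_E|(\partial\Omega)$, not $P(E;\Omega)$. The extra term need not vanish. A concrete instance: $\Omega = B$, $E = B\cap\{x_n>0\}$. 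Here $P(E;\Omega) = \omega_{n-1}$ (the flat disk), but $\partial^* E$ also contains $\partial B\cap\{x_n>0\}$, and one computes $\lim_{\varepsilon\to 0}\int_B|\nabla u_\varepsilon| = \omega_{n-1} + \tfrac{1}{2}\mathcal{H}^{n-1}(\partial B\cap\{x_n>0\})$, strictly larger than $P(E;\Omega)$. Since the coarea average is the only thing your mean-value argument controls, your selected level sets $E_\varepsilon$ would only satisfy $\limsup P(E_\varepsilon;\Omega)\le \omega_{n-1}+\tfrac14 n\omega_n$, which combined with lower semicontinuity is not enough to pin down the limit as $P(E;\Omega)$. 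The diagonal choice of $\delta_\varepsilon$, which you flag as the crux, is fine; it is the unconditional $\limsup$ bound that fails.

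A second, smaller issue: the reduction to bounded $E$ by intersecting with $U\supset\supset\Omega$ presupposes that $E$ has finite perimeter in $U$ (so that $E\cap U$ is a set of finite perimeter in $\R^n$ and $\chi_{E\cap U}*\rho_\varepsilon$ is a legitimate $BV$ mollification). The hypothesis is only that $E$ has finite perimeter \emph{in} $\Omega$; nothing constrains $\partial E$ between $\partial\Omega$ and $\partial U$. To make the scheme work you would need either the extra hypothesis $|D\chi_E|(\partial\Omega)=0$ together with finite perimeter in a neighborhood of $\overline\Omega$, or you should instead run the Anzellotti--Giaquinta smoothing, which uses a partition of unity with mollification radii shrinking toward $\partial\Omega$ to produce $u_h\in C^\infty(\Omega)$ with $\int_\Omega|\nabla u_h|\to |D\chi_E|(\Omega)$ exactly, at the cost that the resulting level sets are smooth in $\Omega$ rather than globally smooth bounded open sets.
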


In the particular case of convex sets, the following lemma holds.
\begin{lem} \label{periconv}
	Let  $(E_j) \subseteq \R^{n}$ be  a sequence of  convex sets such that  $E_j\rightarrow B$ in measure, then $\lim_{j\to \infty}P(E_j)= P(B)$.
\end{lem}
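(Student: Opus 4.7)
The lower bound is free from the lsc–compactness proposition already stated: since $V(E_j\Delta B)\to 0$ forces $E_j\to B$ in measure and $\sup_j P(E_j)<\infty$ (see below), one gets
\[
P(B)\le \liminf_{j\to\infty} P(E_j).
\]
Thus the whole content of the lemma is the matching upper bound $\limsup_j P(E_j)\le P(B)$. The natural way to obtain it for \emph{convex} sets is to upgrade convergence in measure to Hausdorff convergence and then invoke the monotonicity of the perimeter on convex bodies.

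\textbf{Step 1: uniform boundedness.} Since $V(E_j)\to V(B)>0$ and $V(E_j\setminus B)\to 0$, for $j$ large $E_j$ shares with $B$ a subset of volume at least $V(B)/2$; by a Vitali-type argument this forces $E_j$ to contain a ball of fixed radius $\rho>0$ centered at some point of $B$. If $(E_j)$ were not uniformly bounded we could pick $x_j\in E_j$ with $|x_j|\to\infty$; the convex hull of $x_j$ with such a fixed ball inside $E_j\cap B$ is contained in $E_j$ and contributes volume outside $B_{2}$ of order $|x_j|\,\rho^{n-1}$, contradicting $V(E_j\setminus B)\to 0$. Hence there is $R>0$ with $E_j\subset B_R$ for all $j$.

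\textbf{Step 2: Hausdorff convergence.} By Blaschke's selection theorem, every subsequence of $(E_j)$ admits a sub-subsequence converging in Hausdorff distance to some compact convex set $K\subseteq B_R$. Hausdorff convergence of convex bodies implies convergence in measure, and by uniqueness of the $L^1$ limit we must have $K=B$. Since the Hausdorff limit is the same along every subsequence, the whole sequence satisfies $d_{\mathcal H}(E_j,B)\to 0$.

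\textbf{Step 3: monotonicity and conclusion.} For convex bodies $C\subseteq D$ one has the classical inequality $P(C)\le P(D)$ (the nearest-point projection onto a convex set is $1$-Lipschitz, which yields a surjective $1$-Lipschitz map $\partial D\to\partial C$). Given $\varepsilon>0$, the Hausdorff convergence gives $E_j\subset B_{1+\varepsilon}$ for all large $j$, so by the scaling \eqref{res_prop}
\[
P(E_j)\le P(B_{1+\varepsilon})=(1+\varepsilon)^{n-1}P(B).
\]
Letting first $j\to\infty$ and then $\varepsilon\to 0^+$ yields $\limsup_j P(E_j)\le P(B)$, which combined with the lsc bound proves $\lim_j P(E_j)=P(B)$.

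\textbf{Main obstacle.} The only delicate point is Step~1: deducing uniform boundedness of $(E_j)$ from the mere convergence in measure to the bounded set $B$. Convexity is essential here, since without it arbitrarily thin tentacles could escape to infinity while preserving $L^1$-convergence. Once uniform boundedness is established, Blaschke's theorem and the monotonicity of the perimeter make the remaining steps routine.
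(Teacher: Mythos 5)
Your proof is correct and rests on the same two pillars as the paper's: upgrade the convergence in measure of convex sets to Hausdorff convergence, then exploit monotonicity of the perimeter under inclusion of convex bodies. The only real difference is one of self-containedness: the paper simply cites a reference (\cite{eft}) for the fact that, for convex sets, $L^1$ convergence implies Hausdorff convergence, and then sandwiches $B$ between $(1-\varepsilon_j)E_j$ and $(1+\varepsilon_j)E_j$ to get both the upper and lower bounds from monotonicity alone; you instead reconstruct the Hausdorff convergence from scratch (uniform boundedness via a convex-hull volume argument, then Blaschke selection plus uniqueness of the $L^1$ limit) and obtain the lower bound from lower semicontinuity rather than the sandwich. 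Both routes are sound; yours is longer but makes explicit the point, which the paper glosses over, that convexity is exactly what rules out mass escaping to infinity.

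One small stylistic remark: the phrase ``Vitali-type argument'' in Step 1 is a bit misleading — what you actually need is the elementary fact that a convex subset of $B$ with volume at least $V(B)/2$ has inradius bounded below by a dimensional constant (e.g.\ via John's ellipsoid, or directly from the inequality $V(K)\le C(n)\,\mathrm{diam}(K)^{n-1} r(K)$), not a covering argument. The substance is right; just name the tool more accurately.
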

\begin{proof}
	Since, in the case of  convex sets,  the convergence in measure implies the Hausdorff convergence, we have that $\lim_{j\to \infty}d_{\mathcal H}(E_j,B)= 0$ (see for instance \cite{eft}). Thus, for $j$ large enough, there exists $\varepsilon_j$, such that 
	$$
	(1-\varepsilon_j)E_j \subset B \subset (1+\varepsilon_j)E_j .
	$$
	Being the perimeter monotone with respect to the inclusion of convex sets  then
	$$
	(1-\varepsilon_j)^{n-1} P(E_j) \leq P(B)\leq (1+\varepsilon_j)^{n-1}P(E_j).
	$$
When  $j$ goes to infinity, we have the thesis.
\end{proof}

We conclude this paragraph by recalling the following result (see \cite{eft}).
\begin{lem}
Let $K\subseteq\mathbb{R}^n$, $n\geq 2$, be a bounded, open,  convex set. There exists a positive constant $C(n)$ such that\begin{equation}
\label{diam}
	{\rm diam}(K)\leq C(n)\dfrac{P(K)^{n-1}}{V(K)^{n-2}}.
\end{equation}
\end{lem}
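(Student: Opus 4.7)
The plan is to slice $K$ by hyperplanes perpendicular to the diameter, apply the $(n-1)$-dimensional isoperimetric inequality to each slice, and use the Brunn--Minkowski concavity of the slice-volume function to reconcile the bounds in terms of $\mathrm{diam}(K)$, $V(K)$, and $P(K)$. Fix two points $x, y \in \overline{K}$ realizing $d:=\mathrm{diam}(K)$. After rotation and translation I may assume $x = 0$, $y = d\,e_n$, so that $K \subseteq \mathbb{R}^{n-1}\times[0,d]$. For $t \in [0,d]$, write $K_t := K \cap \{x_n = t\}$, viewed as a convex body in $\mathbb{R}^{n-1}$, and set $A(t):=\mathcal{H}^{n-1}(K_t)$ and $p(t):=\mathcal{H}^{n-2}(\partial K_t)$. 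Note $A(0)=A(d)=0$, while $A(t)>0$ for $t\in(0,d)$.

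The first key ingredient is the coarea inequality $P(K)\ge \int_0^d p(t)\,dt$: applying the coarea formula on $\partial K$ to the Lipschitz function $x_n$ and using $|\nabla_{\partial K}\,x_n|\le 1$ gives exactly this estimate (for smooth convex bodies; the general case follows by approximation as noted below). The $(n-1)$-dimensional isoperimetric inequality applied slice by slice then yields $p(t)\ge c_{n-1}\,A(t)^{(n-2)/(n-1)}$, where $c_{n-1}:=(n-1)\,\omega_{n-1}^{1/(n-1)}$. The second ingredient is Brunn--Minkowski: the function $g(t):=A(t)^{1/(n-1)}$ is concave and non-negative on $[0,d]$ with $g(0)=g(d)=0$. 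Let $M:=\max_{[0,d]}g=g(t_0)$. Concavity together with the boundary vanishing forces $g$ to lie above the piecewise-linear tent with apex $M$ at $t_0$, and a direct computation gives
\[
\int_0^d g(t)^{n-2}\,dt \ \ge\ \frac{M^{n-2}\,d}{n-1},\qquad
V(K)=\int_0^d g(t)^{n-1}\,dt \ \le\ M^{n-1}\,d,
\]
the second yielding $M \ge (V(K)/d)^{1/(n-1)}$.

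Chaining these estimates,
\[
P(K)\ \ge\ c_{n-1}\int_0^d g(t)^{n-2}\,dt\ \ge\ \frac{c_{n-1}}{n-1}\,M^{n-2}\,d\ \ge\ \frac{c_{n-1}}{n-1}\left(\frac{V(K)}{d}\right)^{\!(n-2)/(n-1)}\!d,
\]
and raising to the $(n-1)$-th power gives $d\cdot V(K)^{n-2}\le C(n)\,P(K)^{n-1}$ with $C(n)=((n-1)/c_{n-1})^{n-1}$, which is the claim. The main obstacle is not the algebra but the justification of the coarea inequality for a general bounded open convex set: I would establish it first for smooth convex bodies and then approximate $K$ by a sequence of smooth convex sets (e.g.\ mollifications of the support function or inner/outer parallel sets), noting that $V$, $\mathrm{diam}$ and, by Lemma~\ref{periconv}, also $P$ are continuous along such convex approximations, so the inequality passes to the limit.
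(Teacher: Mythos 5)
The paper simply cites this estimate from \cite{eft} and gives no proof of its own, so there is no in-paper argument to compare against; your proof is a correct, self-contained derivation. The chain is sound: the coarea estimate $P(K)\ge\int_0^d p(t)\,dt$ for the height function $x_n$ restricted to $\partial K$, the $(n-1)$-dimensional isoperimetric inequality $p(t)\ge (n-1)\omega_{n-1}^{1/(n-1)}A(t)^{(n-2)/(n-1)}$ on each slice, Brunn's concavity theorem for $g=A^{1/(n-1)}$, and the tent-function bounds $\int_0^d g^{n-2}\ge M^{n-2}d/(n-1)$ and $V(K)=\int_0^d g^{n-1}\le M^{n-1}d$ combine exactly as you write to give $d\,V(K)^{n-2}\le\big((n-1)/c_{n-1}\big)^{n-1}P(K)^{n-1}$, including the degenerate case $n=2$ where the argument collapses to the sharp bound $P(K)\ge 2\,\mathrm{diam}(K)$. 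Two small remarks. First, since $\partial K$ is a Lipschitz, hence $(n-1)$-rectifiable, hypersurface, the coarea formula for Lipschitz functions on rectifiable sets applies directly and the smooth-approximation detour is unnecessary (though harmless). Second, the fact that $g(0)=g(d)=0$ deserves a one-line justification: if some $z\neq 0$ lay in $\overline K\cap\{x_n=0\}$ then $|z-de_n|^2=|z'|^2+d^2>d^2$ would contradict the extremality of $d$, so the end-slices are singletons and $A(t)\to 0$ as $t\to 0^+$ and $t\to d^-$; this is what makes $g$ lie above the tent on all of $[0,d]$.
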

\subsection{Nearly spherical sets}
In this section we  give the definition of nearly spherical sets  and we recall some of  their basic properties (see for instance \cite{bdp,f, fusco}).
\begin{defn} \label{nearlysp}
Let $n\geq 2$. An open, bounded  set $E\subset \R^n$ is said  a nearly spherical set  parametrized by $v$,  if  there exists $v \in W^{1,\infty}(\S^{n-1})$ such that 
\begin{equation}\label{nearly}
    \pa E= \left\{y \in \R^n \colon y=x(1+v(x)), \, x \in \S^{n-1}\right\},
\end{equation}
with $||v||_{W^{1,\infty}}\le\frac{1}{2}$.
\end{defn}
	Note also that $||v||_{L^\infty}=d_{\mathcal H}(E,B)$. 	
The  perimeter,  the volume and the boundary momentum of a nearly spherical set are given by	
	 \begin{equation}\label{perimeter_ns}
	P(E)=\int_{\mathbb{S}^{n-1}} \left(1+v(x)\right)^{n-2}\sqrt{\left(1+v(x)\right)^2+|D_{\tau}v(x)|^2}\;d\mathcal{H}^{n-1},
	\end{equation}
	\begin{equation}\label{vol_ns}
	V(E)=\dfrac{1}{n}\int_{\mathbb{S}^{n-1}} \left(1+v(x) \right)^n\;d\mathcal{H}^{n-1},
	\end{equation}
	\begin{equation}\label{momentum_ns}
	W(E)=\int_{\mathbb{S}^{n-1}} \left(1+v(x)\right)^{n}\sqrt{\left(1+v(x)\right)^2+|D_{\tau}v(x)|^2}\;d\mathcal{H}^{n-1}.
	\end{equation}

Finally,  we recall  two lemmas that we will use later.  The first one is an interpolation result;  for its proof we refer for instance to \cite{f,fusco}. 
\begin{lem} \label{interp}
If $v \in W^{1,\infty}(\S^{n-1})$ and $\displaystyle\int_{\S^{n-1}}v \,d\H^{n-1}=0$, then
\begin{eqnarray}
||v||_{L^{\infty}(\S^{n-1})}^{n-1} \le 
\begin{cases}
\pi \|D_\tau v\|_{L^2(\S^{n-1})} & n=2\\
4||D_\tau v||^2_{L^{2}(\S^{n-1})} \log \frac{8e ||D_\tau v||_{L^{\infty}(\S^{n-1})}^{n-1}}{||D_\tau v||_{L^{2}(\S^{n-1})}^{2}} & n=3
\\[.2cm]
C(n)  ||D_\tau v||_{L^{2}(\S^{n-1})}^{2} ||D_\tau v||_{L^{\infty}(\S^{n-1})}^{n-3}\,\, & n=4
\end{cases}
\end{eqnarray}
\end{lem}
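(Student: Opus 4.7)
The plan is to derive all three estimates from a single max-point identity and then to specialise to each dimension by an appropriate weighted norm estimate. First, I would fix $x_0\in\mathbb{S}^{n-1}$ at which $v$ attains its $L^\infty$-norm $M$, assuming without loss of generality $v(x_0)=M>0$. The zero-mean condition gives
\[
\omega_{n-1}\,M \;=\; \int_{\mathbb{S}^{n-1}}\bigl(v(x_0)-v(x)\bigr)\,d\mathcal{H}^{n-1}(x),
\]
where $\omega_{n-1}=\mathcal{H}^{n-1}(\mathbb{S}^{n-1})$. Writing the integrand as $-\int_0^{t(x)}D_\tau v(\gamma_x(s))\cdot\dot\gamma_x(s)\,ds$ along the minimising geodesic $\gamma_x$ from $x_0$ to $x$, with $t(x)=d_{\mathbb{S}^{n-1}}(x_0,x)$, then passing to geodesic polar coordinates $(t,\nu)\in[0,\pi]\times\mathbb{S}^{n-2}$ about $x_0$ (so that $d\mathcal{H}^{n-1}=\sin^{n-2}(t)\,dt\,d\nu$) and applying Fubini, one arrives at an inequality of the form
\[
\omega_{n-1}\,M \;\leq\; \int_{\mathbb{S}^{n-1}}|D_\tau v(x)|\,K\!\bigl(t(x)\bigr)\,d\mathcal{H}^{n-1}(x),\qquad K(t):=\frac{1}{\sin^{n-2}(t)}\int_{t}^{\pi}\sin^{n-2}(s)\,ds.
\]

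Then I would estimate the weighted $L^1$-integral above case by case, the difficulty being driven by the singularity of $K$ at $t=0$. For $n=2$ the kernel reduces to $K(t)=\pi-t\leq\pi$, which is bounded, and Cauchy--Schwarz closes the estimate at once (in fact with the sharper constant $\sqrt{\pi/2}$, already stronger than the $\pi$ in the statement). For $n=3$ one has $K(t)=\cot(t/2)\sim 2/t$ at the pole, which is the marginal case: splitting the domain as $\mathbb{S}^2=B_\delta(x_0)\cup(\mathbb{S}^2\setminus B_\delta(x_0))$, I would bound the inner piece by $\|D_\tau v\|_{L^\infty}\int_{B_\delta}K\,d\mathcal{H}^2=O(\delta)\|D_\tau v\|_{L^\infty}$ and the outer one by Cauchy--Schwarz against $\bigl(\int_{\mathbb{S}^2\setminus B_\delta}K^2\,d\mathcal{H}^2\bigr)^{1/2}=O\bigl(\sqrt{\log(1/\delta)}\bigr)$, and then balance the two contributions through the choice $\delta\sim \|D_\tau v\|_{L^2}/\|D_\tau v\|_{L^\infty}$. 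For $n=4$ the kernel has the stronger singularity $K(t)\sim C/t^2$, but the $L^2$-norm of $K$ on $\mathbb{S}^3\setminus B_\delta$ is only of order $\delta^{-1/2}$, while its $L^1$-norm on $B_\delta$ is of order $\delta$; the balance $\delta\sim(\|D_\tau v\|_{L^2}/\|D_\tau v\|_{L^\infty})^{2/3}$ then yields $M^3\leq C(n)\|D_\tau v\|_{L^2}^{2}\|D_\tau v\|_{L^\infty}$, as claimed.

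The main obstacle lies in the sharp tracking of the constants in the $n=3$ case, where the statement prescribes the precise form $4\|D_\tau v\|_{L^2}^{2}\log\!\bigl(8e\,\|D_\tau v\|_{L^\infty}^{2}/\|D_\tau v\|_{L^2}^{2}\bigr)$: this forces one to compute $\int_{\mathbb{S}^2\setminus B_\delta}K^2\,d\mathcal{H}^2$ with the correct leading coefficient (note that $K(t)^2\sin t\sim 4/t$ as $t\to 0^+$, which is exactly what produces the factor $4$) and to tune the cutoff $\delta$ with more care than the heuristic balance above. The $n=4$ constant $C(n)$ and the $n=2$ constant $\pi$ are, by contrast, immediate byproducts of the general estimate. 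When these bookkeeping computations become tedious, I would simply invoke the corresponding statements in \cite{f,fusco}, where this precise Brezis--Gallouet-type interpolation on $\mathbb{S}^{n-1}$ has been carried out in detail.
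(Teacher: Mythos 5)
The paper does not actually prove Lemma~\ref{interp}: immediately before the statement it writes ``for its proof we refer for instance to \cite{f,fusco}.'' So there is no internal proof to compare your argument against, and the comparison is between your sketch and those references.

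Your representation-formula route is sound and is a genuinely different argument from the one in the cited sources, which (at least in Fuglede's paper and in Fusco's survey) proceed via an expansion of $v$ in spherical harmonics and, for $n=3$, a Brezis--Gallou\"et-type truncation of the Fourier coefficients. Your derivation of the kernel inequality
\[
\omega_{n-1}\,\|v\|_{L^\infty}\;\le\;\int_{\S^{n-1}}|D_\tau v(x)|\,K(t(x))\,d\H^{n-1},\qquad K(t)=\frac{1}{\sin^{n-2}t}\int_t^\pi\sin^{n-2}s\,ds,
\]
via geodesic polar coordinates and Fubini is correct, and you have the singularity classification right: $K$ bounded for $n=2$, $K(t)\sim 2/t$ for $n=3$ (the marginal case producing the logarithm), and $K(t)\sim c_n/t^{n-2}$ for $n\ge 4$, with the cutoff balances $\delta\sim q/p$ and $\delta\sim(q/p)^{2/(n-1)}$ (with $q=\|D_\tau v\|_{L^2}$, $p=\|D_\tau v\|_{L^\infty}$) reproducing the stated exponents. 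What remains genuinely open in your sketch is the precise bookkeeping for $n=3$: to land on $4q^2\log(8e\,p^2/q^2)$ you must (i) compute the outer $L^2$-integral of $K$ with its exact leading coefficient, (ii) absorb the linear term $C_1 q$ coming from the inner piece into the logarithm via $(a+b)^2\le 2(a^2+b^2)$ and an identity of the type $c_1+c_2\log(p/q)=c_2\log(e^{c_1/c_2}p/q)$, and (iii) handle the regime $q/p$ not small (so that the naive cutoff $\delta=q/p$ may exceed $\pi$; here one should cap $\delta$ and note that $8e\,p^2/q^2>1$ always, so the logarithm stays positive). You acknowledge this gap and defer to the literature, which is acceptable since that is exactly what the paper itself does. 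One small remark: the third case should read $n\ge 4$ rather than $n=4$ (your method already yields the general estimate $\|v\|_\infty^{n-1}\le C(n)\,q^2 p^{\,n-3}$ for all $n\ge 4$), and your observation that the $n=2$ case actually gives the better constant $\sqrt{\pi/2}$ (or even $\sqrt{\pi/6}$ via $\|K\|_{L^2}$) is correct and consistent with the stated $\pi$.
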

For this second lemma see for instance   \cite{fusco}.
\begin{lem} \label{utile}
Let $n\geq 2$. There exists $\varepsilon_0$ such that, if $E$ is a convex, nearly spherical set with $V(E)=V(B)$ and $||v||_{W^{1,\infty}} \le \varepsilon_0$, then
\begin{equation} \label{utileq}
    ||D_\tau v||_{L^{\infty}} ^2 \le 8||v||_{L^{\infty}}.
\end{equation}
\end{lem}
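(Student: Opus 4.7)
The plan is to use convexity of $E$ at every differentiability point $x_0\in\S^{n-1}$ of $v$ (a.e.\ point, since $v\in W^{1,\infty}$) in order to obtain the pointwise bound $|D_\tau v(x_0)|^2\le 8\,\|v\|_{L^\infty}$; passing to the essential supremum then yields the claim. The key observation is that at such an $x_0$ the tangent hyperplane to $\partial E$ at $y_0=(1+v(x_0))\,x_0$ is a supporting hyperplane for the convex body $E$, so one can compare $\partial E$ against it at a second, carefully chosen point of $\S^{n-1}$.

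Concretely, I would differentiate the parametrization $y(x)=(1+v(x))\,x$ at $x_0$: tangent vectors to $\partial E$ at $y_0$ are of the form $(1+v(x_0))\,e+(D_\tau v(x_0)\cdot e)\,x_0$ for $e\in T_{x_0}\S^{n-1}$, whence the outer normal direction is
\[
N=(1+v(x_0))\,x_0-D_\tau v(x_0),\qquad |N|^2=(1+v(x_0))^2+|D_\tau v(x_0)|^2,
\]
with $y_0\cdot N=(1+v(x_0))^2$. Convexity gives $(y-y_0)\cdot N\le 0$ for every $y\in E$, and applied to $y=(1+v(x))\,x$ at the distinguished point $x^{\ast}:=N/|N|\in\S^{n-1}$, which maximizes $x\cdot N$ on the sphere, this reduces to
\[
(1+v(x^{\ast}))\sqrt{(1+v(x_0))^2+|D_\tau v(x_0)|^2}\le (1+v(x_0))^2.
\]

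Squaring and using the factorization $(1+v(x_0))^2-(1+v(x^{\ast}))^2=(v(x_0)-v(x^{\ast}))\,(2+v(x_0)+v(x^{\ast}))$, one rewrites the inequality as
\[
|D_\tau v(x_0)|^2\le \frac{(1+v(x_0))^2\,(v(x_0)-v(x^{\ast}))\,(2+v(x_0)+v(x^{\ast}))}{(1+v(x^{\ast}))^2}.
\]
To conclude, I would use $v(x_0)-v(x^{\ast})\le 2\,\|v\|_{L^\infty}$ together with the elementary estimate
\[
\frac{(1+v(x_0))^2\,(2+v(x_0)+v(x^{\ast}))}{(1+v(x^{\ast}))^2}\le \frac{2(1+\|v\|_{L^\infty})^3}{(1-\|v\|_{L^\infty})^2},
\]
whose right-hand side tends to $2$ as $\|v\|_{L^\infty}\to 0$; choosing $\varepsilon_0$ small enough that this prefactor stays at most $4$ yields exactly the claimed bound $|D_\tau v(x_0)|^2\le 8\,\|v\|_{L^\infty}$. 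The conceptual heart of the argument is the choice $x^{\ast}=N/|N|$, which converts the nonlocal convexity constraint into a useful pointwise inequality; everything that follows is a continuity argument on explicit constants. The Lipschitz regularity of $v$ legitimizes the use of the supporting hyperplane at a.e.\ point of $\partial E$, and the normalization $V(E)=V(B)$ plays no explicit role in the pointwise analysis.
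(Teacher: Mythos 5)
Your proof is correct; the paper itself does not prove Lemma \ref{utile} but simply refers the reader to \cite{fusco}, and your supporting-hyperplane argument is essentially the standard one from that literature. The conceptual core is exactly right: at a differentiability point $x_0$, the normal $N=(1+v(x_0))x_0-D_\tau v(x_0)$ satisfies $y_0\cdot N=(1+v(x_0))^2$ and $|N|^2=(1+v(x_0))^2+|D_\tau v(x_0)|^2$, and testing the supporting-hyperplane inequality at the boundary point corresponding to $x^*=N/|N|$ gives
\[
(1+v(x^*))\sqrt{(1+v(x_0))^2+|D_\tau v(x_0)|^2}\le(1+v(x_0))^2 ,
\]
from which squaring, the factorization of $(1+v(x_0))^2-(1+v(x^*))^2$, the bound $v(x_0)-v(x^*)\le 2\|v\|_{L^\infty}$, and the observation that the prefactor $2(1+\|v\|_{L^\infty})^3/(1-\|v\|_{L^\infty})^2$ tends to $2$ together produce the constant $8$ once $\varepsilon_0$ is small enough, say $(1+\varepsilon_0)^3\le 2(1-\varepsilon_0)^2$. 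Two minor points worth making explicit if you write this up: since $v\in W^{1,\infty}(\mathbb{S}^{n-1})$ is Lipschitz, Rademacher's theorem gives classical differentiability a.e., and the pointwise bound passes to the essential supremum; and the sign $y_0\cdot N=(1+v(x_0))^2>0$, combined with $0\in E$ (guaranteed by $\|v\|_{L^\infty}\le 1/2$), confirms that $N$ is a positive multiple of the outer normal, so the supporting-hyperplane inequality $(y-y_0)\cdot N\le 0$ for $y\in\overline{E}$ holds in the direction you use. Your remark that the normalization $V(E)=V(B)$ plays no role in this estimate is also correct.
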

Finally, we prove the following 
\begin{lem}
Let $n\geq 2$ and let $E \subseteq \R^n$ be a  bounded, convex, nearly spherical set with 
 $||v||_{W^{1,\infty} }\le \delta$, then
\begin{equation}
\label{distsharp}
d_{\mathcal H}(E,E^*) \le C(n) d_{\mathcal H}(E,E^{\sharp}),
\end{equation}
where $E^*$ and $E^{\sharp}$ are  the balls centered at the origin  having, respectively, the same perimeter and the same volume as $E$.
\end{lem}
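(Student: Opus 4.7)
The plan is to reduce the claim, via the triangle inequality for the Hausdorff distance, to the one-sided radius comparison $|R^* - R^\sharp| \le d$, where $R^\sharp, R^*$ denote the radii of $E^\sharp, E^*$ and $d := d_{\mathcal H}(E, E^\sharp)$. Indeed, since $E^\sharp = B_{R^\sharp}$ and $E^* = B_{R^*}$ are concentric balls around the origin, $d_{\mathcal H}(E^\sharp, E^*) = |R^* - R^\sharp|$, so once the radius comparison is established, the triangle inequality
$$d_{\mathcal H}(E, E^*) \le d_{\mathcal H}(E, E^\sharp) + d_{\mathcal H}(E^\sharp, E^*) \le 2d$$
yields the statement with the universal constant $C(n) = 2$.

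For the radius comparison, I would proceed as follows. By the definition of Hausdorff distance in \eqref{disth} one has $E \subset E^\sharp + B_d = B_{R^\sharp + d}$, where the last equality uses that both balls are centered at the origin. Since $E$ and $B_{R^\sharp + d}$ are both convex, the classical monotonicity of the perimeter under convex inclusions gives
$$n\omega_n (R^*)^{n-1} = P(E) \le P(B_{R^\sharp + d}) = n\omega_n (R^\sharp + d)^{n-1},$$
so $R^* \le R^\sharp + d$. Conversely, since $V(E) = V(E^\sharp)$, the standard isoperimetric inequality yields $P(E) \ge P(E^\sharp)$, and therefore $R^* \ge R^\sharp$. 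Together these bounds give $0 \le R^* - R^\sharp \le d$, as required.

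The only nontrivial ingredient is the monotonicity of the perimeter for convex bodies under inclusion, which is classical (it follows, for instance, from the fact that the nearest-point projection onto a convex set is $1$-Lipschitz, hence area-decreasing on the boundary). Notice that the smallness hypothesis $\|v\|_{W^{1,\infty}} \le \delta$ and the nearly-spherical structure of $E$ are not really used in the argument; they serve only to ensure that all the objects involved ($E^\sharp$, $E^*$, and the Hausdorff distance between them) are well defined and nondegenerate, and in fact the constant obtained is independent of the dimension.
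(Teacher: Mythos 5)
Your proof is correct, and it takes a genuinely different and in fact more elementary route than the paper's. Both arguments start from the same triangle-inequality split $d_{\mathcal H}(E,E^*) \le d_{\mathcal H}(E,E^\sharp) + d_{\mathcal H}(E^\sharp,E^*)$ with $d_{\mathcal H}(E^\sharp,E^*)=R^*-R^\sharp$ (nonnegative by the isoperimetric inequality). The difference is entirely in how this radius gap is controlled. The paper rewrites $R^*-R^\sharp$ as $R^\sharp$ times the isoperimetric deficit of $E$, rescales to a nearly spherical set $F$ with $V(F)=V(B)$, expands the perimeter via \eqref{perimeter_ns}, and invokes Lemma~\ref{utile} to bound the deficit by $C(n)\|v_F\|_{L^\infty}=C(n)\,d_{\mathcal H}(F,B)$; this makes essential use of the nearly-spherical parametrization, the smallness hypothesis $\|v\|_{W^{1,\infty}}\le\delta$, and produces a dimension-dependent constant. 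You instead observe that $\bar E\subset \bar B_{R^\sharp+d}$ with $d=d_{\mathcal H}(E,E^\sharp)$ and apply perimeter monotonicity for convex bodies to obtain $R^*\le R^\sharp+d$ directly, giving the clean dimension-free bound $d_{\mathcal H}(E,E^*)\le 2\,d_{\mathcal H}(E,E^\sharp)$. Your observation that convexity alone suffices — the nearly-spherical hypothesis and $\delta$-smallness are not used — is accurate, so your version of the lemma is both sharper in the constant and more general in scope. The only thing the paper's route offers in exchange is that it stays entirely within the nearly-spherical toolbox (the perimeter expansion \eqref{perimeter_ns} and Lemma~\ref{utile}) that is used throughout the rest of the paper, whereas yours imports the classical monotonicity of perimeter under inclusion of convex bodies; this is standard, and your brief justification via the $1$-Lipschitz nearest-point projection is the right one.
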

\begin{proof}
By the properties of the Hausdorff distance,  we get
\begin{multline}
\label{primo}
d_{\mathcal H}(E,E^*) \le d_{\mathcal H}(E,E^{\sharp}) + d_{\mathcal H}(E^*,E^{\sharp})=d_{\mathcal H}(E,E^{\sharp})+ \left(\dfrac{P(E)}{n\omega_n}\right)^{\frac {1}{n-1}}-\left(\dfrac{V(E)}{\omega_n}\right)^{\frac {1}{n}}\\=d_{\mathcal H}(E,E^{\sharp})+\left(\dfrac{V(E)}{\omega_n}\right)^{\frac {1}{n}} \left[\left(\dfrac{P(E)}{n\omega_n^{\frac 1 n}V(E)^{\frac{n-1}{n}}}\right)^{\frac {1}{n-1}}-1\right].
\end{multline}
We stress  that, in the square brackets,  we have the isoperimetric deficit of $E$, which is scaling invariant. Let  $F \subseteq \R^n$ be a convex, nearly spherical set  parametrized by $v_F$,  with 
 $||v_F||_{W^{1,\infty}} \le \delta$ and $V(F)=V(B)$. Being $F$ nearly spherical and $||v_F||_{W^{1,\infty}} \le \delta$, from \eqref{perimeter_ns} and Lemma \ref*{utile},  we get
\begin{multline}
\left(\dfrac{P(F)}{n\omega_n^{\frac 1 n}V(F)^{\frac{n-1}{n}}}\right)^{\frac {1}{n-1}}-1=\left(\dfrac{P(F)}{n\omega_n}\right)^{\frac {1}{n-1}}-1=\\=\left(\frac{1}{n \omega_n}\int_{\mathbb{S}^{n-1}} \left(1+v_F(x)\right)^{n-2}\sqrt{\left(1+v_F(x)\right)^2+|D_{\tau}v_F(x)|^2}\, \right)^{\frac {1}{n-1}}-1 \le \\  \le C(n) ||v_F||^{2}_{W^{1,\infty}} \le C(n)||v_F||_{L^{\infty}}. 
\end{multline}
As a consequence,  recalling that $||v_F||_{L^\infty}=d_{\mathcal H}(F,B)$,
\[
\left(\dfrac{V(E)}{\omega_n}\right)^{\frac {1}{n}} \left[\left(\dfrac{P(E)}{n\omega_n^{\frac 1 n}V(E)^{\frac{n-1}{n}}}\right)^{\frac {1}{n-1}}-1\right] \le C(n)d_{\mathcal H}(E,E^{\sharp}).
\]
Using this inequality in \eqref{primo},  we get the claim.
\end{proof}




		
\section{An Isoperimetric inequality}

In \cite{br} the author proved a weighted isoperimetric inequality where the perimeter is replaced by the boundary momentum $W(E)$,  defined as in \eqref{momento}. More precisely, it is proved  that, 
if  $E\subseteq\mathbb{R}^n$ is a Lipschitz set,  then 
\begin{equation}\label{brock}
	\dfrac{W(E)}{V(E)^{\frac{n+1}{n}}}\geq 	\dfrac{W(B)}{V(B)^{\frac{n+1}{n}}}=n\omega_n^{-1/n},
\end{equation}
and  equality holds for a ball.
The inequality  \eqref{brock} implies that, among sets with fixed volume, the boundary momentum and the perimeter are both minimal on balls. 

An isoperimetric inequality for a functional involving the quantities $P(E)$, $W(E)$ and $V(E)$ is proved in  \cite{We} in the planar case and then in \cite{bfnt} in any dimension, restricting to the class of  convex sets.  More precisely, if $E\subseteq\mathbb{R}^n$ is a bounded, open,  convex set, it is proved that 
 \begin{equation}
 \label{Weinstock}
\mathcal{J}(E) =\dfrac{W(E) }{P(E)\;V(E)^{\frac{2}{n}}}\geq  \dfrac{W(B) }{P(B)\;V(B)^{\frac{2}{n}}}=\omega_n^{\frac{-2}{n}}=\mathcal{J}(B)
 \end{equation}
 where equality holds only on  balls centered at the origin. 

In the same spirit, if $F\subset \R^n$ is a bounded,  open, convex set, we define the following functional
\begin{equation} 
\label{nuovo_fun}
    I(F)=\frac{W(F)}{V(F)P(F)^{\frac{1}{n-1}}}.
\end{equation}
The following   isoperimetric inequality holds.
\begin{prop}
Let $n \geq 2$. For every bounded, open, convex  set $F\subset \R^n$,  it holds
\begin{equation}
\label{iso_new} 
I(F) \ge \frac{n}{(n\omega_n)^{\frac{1}{n-1}}}=I(B).
\end{equation}
Equality holds only for balls centered at the origin.
\end{prop}
\begin{proof}
The proof follows easily by using  inequality \eqref{Weinstock}, the standard isoperimetric inequality and  observing that
\[
I(F)=\mathcal{J}(F) \displaystyle\left( \frac{P(F)}{V(F)^{1-\frac{1}{n}}}\right)^{\frac{n-2}{n-1}}. \]
\end{proof}
Our aim  is to prove   a  quantitative version of \eqref{iso_new}. From now on,  we will use the following notation
\begin{equation}
\label{def}
\mathcal{D}(E)=I(E)-\frac{n}{(n\omega_n)^{\frac{1}{n-1}}}=I(E)-I(B).
\end{equation}

%



\subsection{Stability for  nearly spherical sets}
Following  Fuglede's approach (see \cite{f}), we first prove a quantitative version of \eqref{iso_new}  for  nearly spherical sets of the form \eqref{nearlysp}, when $n \ge 3$.  
\begin{thm} \label{fuglede}
Let $n\geq 3$ and $B$ the unit ball of $\R^n$ centered at the origin.  Then,  there exist three positive constants $C_1(n)$, $C_{2}(n)$ and  $\varepsilon=\varepsilon(n)$,  such that,
if  $E \subseteq \R^n$ is  a nearly spherical set with $P(E)=P(B)$ and  $||v||_{W^{1,\infty}} \le \varepsilon$, then
\begin{equation} \label{fuglede1}
  C_1(n) ||v||_{W^{1,1}(\S^{n-1})} \ge \mathcal D(E)
\geq C_{2}(n) ||v||^2_{W^{1,2}(\S^{n-1})}.
\end{equation}
\end{thm}
\begin{proof}
Setting  $v=tu$, with $||u||_{W^{1,\infty}}=1/2$, we have
$||v||_{W^{1,\infty}} = t||u||_{W^{1,\infty}}= t/2 $. Thus, using the expressions of $P(E)$ and $W(E)$  given in \eqref{perimeter_ns} and \eqref{momentum_ns},  we get
\begin{align} \label{step1}
\displaystyle \mathcal D(E) =\frac{n}{P(B)^{\frac{1}{n-1}}}\left( \frac{\ds\int_{\S^{n-1}}\left( 1+tu(x)\right)^n\sqrt{(1+tu(x))^2+t^2|D_{\tau}u(x)|^{2}}\,d\H^{n-1}}{\ds\int_{\S^{n-1}}(1+tu(x))^{n}\,d\H^{n-1}}-1\right)\\ \nonumber
    =\frac{n}{P(B)^{\frac{1}{n-1}}}\left(  \ds\frac{\ds\int_{\S^{n-1}}\left( 1+tu(x)\right)^n\left(\sqrt{(1+tu(x))^2+t^2|D_{\tau}u(x)|^{2}}-1\right)\,d\H^{n-1}}{n V(E)} \right).
\end{align}
Now we prove the lower bound in \eqref{fuglede1}.
Firstly we  take into account the numerator in \eqref{step1}.
Let $f_k(t)=(1+tu)^k\sqrt{(1+tu)^2+t^2|D_\tau u|^2}$. An elementary calculation shows that
\begin{align} \label{tay}
\nonumber
f_k(0)=0, \qquad f_k'(0)=(k+1)u, \qquad f_k''(0)= (k)(k+1)u^2+|D_\tau u|^2 \\
f_k'''(\tau) \le 2(k+2)(k+1)k \left( u^3+|u||D_\tau u|^2 \right)
\end{align}
for any $\tau \in (0,t)$. Thus, since the numerator of \eqref{step1} is given by $f_n (t)-(1+tu)^{n}$,  using the Lagrange expression of the remainder term, we can Taylor expand up to the third order, obtaining
\begin{multline}
\label{1pass}
\int_{\S^{n-1}}\left( 1+tu(x)\right)^n\left(\sqrt{(1+tu(x))^2+t^2|D_{\tau}u(x)|^{2}}-1\right)\,d\H^{n-1} \\\geq t\int_{\S^{n-1}} ud\H^{n-1}+nt^2\int_{\S^{n-1}} u^2 d\H^{n-1}+\frac{1}{2}t^{2} \int_{\S^{n-1}} |D_\tau u|^2d\H^{n-1} \\ 
- C(n)\varepsilon t^2 \int_{\S^{n-1}}\left( u^2+|D_\tau u|^2\right) d\H^{n-1}.
\end{multline}
Since  $P(E)=P(B)$, we have
\begin{align}
    \int_{\S^{n-1}}(1+tu(x))^{n-2}\sqrt{(1+tu(x))^2+t^2|D_{\tau}u(x)|^{2}}\,d\H^{n-1}=\int_{\S^{n-1}}1d\H^{n-1}.
\end{align}
Using \eqref{tay} for $f_{n-2}$, we infer
\begin{multline} 
\label{sameper}
    t\int_{\S^{n-1}} ud\H^{n-1} \geq -\frac{n-2}{2} t^2\int_{\S^{n-1}} u^2d\H^{n-1}- \frac{t^2}{2(n-1)}\int_{\S^{n-1}} |D_\tau u|^2d\H^{n-1} 
    \\
    -C_1(n)\varepsilon t^2 \int_{\S^{n-1}}\left( u^2+|D_\tau u|^2\right) d\H^{n-1}.
\end{multline}
Since $n \ge 3$ ,  using inequality \eqref{sameper}  in \eqref{1pass}, we get
\begin{multline} \label{dis}
    \int_{\S^{n-1}}\left( 1+tu(x)\right)^n\left(\sqrt{(1+tu(x))^2+t^2|D_{\tau}u(x)|^{2}}-1\right)\,d\H^{n-1} \\\geq  \left( \frac{n+2}{2} -C_{2}(n) \varepsilon \right)t^2 \int_{\S^{n-1}} u^2 d\H^{n-1}+\left( \frac{n-2}{2(n-1)} -C_1 \varepsilon \right)t^2 \int_{\S^{n-1}} |D_\tau u|^2d\H^{n-1}.
\end{multline}
Choosing $\varepsilon = \frac{1}{2}\min 
\left\{ \frac{n+2}{2C_{2}(n)}, \frac{n-2}{2C_1 (n-1)} \right\}$, we obtain
$$
\mathcal D(E) \geq C_2(n)  ||tu||^2_{W^{1,2}(\S^{n-1})}\geq C_2(n)  ||v||^2_{W^{1,2}(\S^{n-1})},$$
which is the lower bound in \eqref{fuglede1}.
 Then,
	\begin{multline}
	\dfrac{W(E)}{n V(E)}-1= \dfrac{\displaystyle\int_{\mathbb{S}^{n-1}}\left(1+v(x)\right)^n\left(\sqrt{\left(1+v(x)\right)^2+|D_{\tau}v(x)|^2}-1\right)\; d\mathcal{H}^{n-1}     }{n V(E)}\\ \le C(n) \dfrac{\displaystyle\int_{\mathbb{S}^{n-1}} \left(\sqrt{\left(1+v(x)\right)^2+|D_{\tau}v(x)|^2}-1\right)\; d\mathcal{H}^{n-1}     }{n V(E)}\\  \le C(n)\dfrac{\displaystyle\int_{\mathbb{S}^{n-1}} \left(\sqrt{\left(1+v(x)+|D_{\tau}v(x)|\right)^2}-1\right)\; d\mathcal{H}^{n-1}     }{n V(E)} \\ \le C(n)\dfrac{\displaystyle\int_{\mathbb{S}^{n-1}} \left(|v(x)|+|D_{\tau}v(x)|\right)\; d\mathcal{H}^{n-1}     }{n V(E)} \le C(n)||v||_{W^{1,1}(\S^{n-1})},
	\end{multline}
	where last inequality follows from H\"older inequality and from the following estimate
	\[
	nV(E)= \int_{\mathbb{S}^{n-1}}\left(1+v(x)\right)^n\, d\mathcal{H}^{n-1} \ge n\omega_n\left(\frac{1}{2}\right)^n.
	\]

 \end{proof}
 
\subsection{Stability for convex sets}
Before completing  the proof of the quantitative version of the inequality \eqref{iso_new}, we need the following useful technical lemmas. 
\begin{lem} \label{bound}
Let $n\ge2$. There exists $M>0$ such that, if $F \subseteq \R^n$ is a bounded, open, convex set with $I(F)\le \displaystyle\frac{2n}{(n\omega_n)^{\frac{1}{n-1}}}$ and $|F|=1$, then $F \subset Q_M$, where $Q_M$ is the hypercube centered at the origin with edge $M$.
\end{lem}
\begin{proof}
Since the functional is scale invariant, we can assume $|F|=1$.
Let $L>1$, we have 
\begin{align*}
    W(F)&=\int_{\pa F}|x|^{2}d\H^{n-1}=\int_{(\pa F ) \cap Q_L}|x|^{2}d\H^{n-1}+\int_{\pa F \setminus Q_L}|x|^{2}d\H^{n-1} \\ &\geq \int_{\pa F\cap Q_L}|x|^{2}d\H^{n-1}+L^2P(F; C(Q_L)),
\end{align*} 
where by  $C(Q_L)$ we denote  the complementary set of $Q_L$ in $\R^{n}$.
Since $F$ is convex, also $F \cap Q_L$ is convex  and then
\begin{equation}\label{bound1}
P(F) \le  P(F; C(Q_L))+ P(F_; Q_L)\le P(F; C(Q_L))+2L^{n-1},    
\end{equation}
by the monotonicity of the perimeter. Suppose $P(F)>L^n$; then,  equation \eqref{bound1} gives $ P(F; C(Q_L) )\geq L^n-2L^{n-1}$ and, as a consequence, 
\begin{equation}\label{bound2}
I(F) \geq \frac{\ds\int_{\pa F\cap Q_L}|x|^{2}d\H^{n-1}+L^2P(F; C(Q_L))}{\left(  P(F; C(Q_L)+2L^{n-1} \right)^{\frac{1}{n-1}}}>\frac{L^{n+2}-L^{n+1}}{L^{\frac{n}{n-1}}}.    
\end{equation}
The previous inequality leads to a contradiction for  $L$ large enough, since we are assuming  $I(F)<\displaystyle\frac{2n}{(n\omega_n)^{\frac{1}{n-1}}}$, while the last term of the above inequality diverges when $L \rightarrow \infty$.
Thus, there exists $L_0$ such that, for every convex set $F$ with $I(F)\le \displaystyle\frac{2n}{(n\omega_n)^{\frac{1}{n-1}}}$,  we have $P(F) < L_0^n$. Since   $|F|=1$ and  $P(F)\le L_0^n$, using   \eqref{diam},  we get
$$
\text{diam} (F) \le C(n) L_0^{n(n-1)}.
$$
The last inequality  proves \eqref{bound1}, if we choose $M=C(n)L_0^{n(n-1)}$.
\end{proof}
\begin{lem}
\label{low}
Let $(F_j)\subseteq \R^n$, $n \ge 2$, be a sequence of convex sets such that  $I(F_j) \le \displaystyle\frac{2n}{(n\omega_n)^{\frac{1}{n-1}}}$ and $P(F_j)=P(B)$. Then, there exists a convex set $F\subseteq \R^n$ with $P(F)=P(B)$ and such that, up to a  subsequence, 
\begin{equation} \label{comp}
    |F_j\Delta F|\rightarrow 0 \,\,\,\ \text{and} \quad I(F) \le \liminf I(F_j).
\end{equation}
\begin{proof}
The existence of the limit set $F$ comes from the proof of Lemma \ref{bound}: since ${I(F_j) < \displaystyle\frac{2n}{(n\omega_n)^{\frac{1}{n-1}}}}$,  there exists $M>0$ such that $F_j \subset Q_M$ and $P(F_j)=P(B) $ for every $i \in \mathbb{N}.$ Thus,  the sequence $\{ \Chi{E_j} \}_{j\in \mathbb{N}}$ is precompact in $BV(Q_M)$ and so there exists a subsequence and a set $F$ such that $|F\Delta F_j|\rightarrow 0$. Moreover, from Lemma \ref{periconv}, we have that $P(F)=P(B)$.
Note that we can write 
$$
W(F)= \sup \left\{ \int_F \text{div} \left( |x|^2 \phi(x)  \right)dx,\,\,\,\,\, \phi \in C^1_c(Q_M, \R^n), \,\,\,\,\, ||\phi||_\infty \le 1 \right\}.
$$
Observing that 
$$
\int_F |\text{div} \left( |x|^2 \phi(x)  \right)|dx \le M||\text{div} \phi ||_\infty +M^2,
$$
using the dominate convergence theorem, we have that the functional
$$
F \rightarrow \int_F \text{div} \left( |x|^2 \phi(x)  \right)dx
$$
is continuous with respect to the $L^1$ convergence. Hence, since $W(F)$ is obtained by taking the supremum of continuous functionals, it is lower semicontinuous. As a consequence, we obtain  the inequality  \eqref{comp}.
\end{proof}
\end{lem} 
The next result allows us to reduce the study of  the stability issue to  nearly spherical sets. 
\begin{lem}
\label{3.8}
Let $n \ge 2$. For every $\varepsilon>0$, there exists $\delta_{\varepsilon}>0$ such that,  if  $E\subseteq \R^n$ is a bounded, open, convex set  with $P(E)=P(B)$ and $\mathcal{D}(E)< \delta_{\varepsilon}$, with  $\mathcal{D}(E)$ defined as in \eqref{def}, then  there exists a Lipschitz function $v \in W^{1,\infty}(\mathbb S^{n-1})$ such that $E$ is a nearly spherical set parametrized by $v$ and $\|v\|_{W^{1,\infty}} \le \varepsilon.$
\end{lem}
\begin{proof} 
Firstly, we  prove that $d_\H(E,B) < \varepsilon$.
Suppose by contraddiction that there exists $\varepsilon_0>0$  such that, for every $j \in \mathbb{N}$, there   exists a convex set $E_j$ with $I(E_j)-\displaystyle\frac{2n}{(n\omega_n)^{\frac{1}{n-1}}}<\frac{1}{j}$,  $d_\H(E_j,B)>\varepsilon_0$ and $P(E_j)=P(B)$. By Lemma \ref{low},  we have that there exists a convex set $E$  such that
$E_j$ converges to $E$ in measure and $P(E)=P(B)$. From the semicontinuouity of 
$W(E)$, we have that $I(E) \le \liminf I(E_j) \le \displaystyle\frac{2n}{(n\omega_n)^{\frac{1}{n-1}}}$. Since $B$ is the only minimizer of the functional $I$, we obtain the contradiction. Then,  since  $E$ is convex and $d_\H(E,B) \le \varepsilon$, $E$ contains the origin and so there exists a Lipschitz function $v \in L^{\infty}(\S^{n-1})$, with $||v||_\infty< \varepsilon$, such that
$$
\pa E=\{ x(1+v(x)), \, x\in \S^{n-1} \}.
$$
Now, in order to complete the proof,  we have only  to show that $\|v\|_{W^{1,\infty}}$ is small when $\mathcal D(E )$ is small. This is a consequence of Lemma \ref{utile}.
\end{proof}
Now we can prove the  stability result for the  inequality \eqref{iso_new}. We first consider the case  $n \ge 3$. The two dimensional case will be discussed separately in the next section.
\begin{thm}\label{main2}
Let $n \geq 3$. There exist $\delta$ and  $C(n)>0$ such that, if $E\subseteq \R^n$ is a  bounded,  open,   convex set with  $\mathcal D(E)\le \delta$,  then
\begin{eqnarray} \label{quantisharp}
\displaystyle \mathcal{A_\mathcal{H}}(E) \le 
\begin{cases}
\sqrt{  \mathcal D(E)\log\frac{1}{\mathcal D(E)}} & n=3
\\[.2cm]
C(n)  \left(\mathcal D(E) \right)^{\frac{2}{n+1}} & n\ge 4,
\end{cases}
\end{eqnarray}
where $\mathcal{A_\mathcal{H}}(E) $  and $\mathcal D(E)$ are  defined  in  \eqref{asy} and \eqref{def} respectively.
\end{thm}
\begin{proof}
Since the functional $I$ is scaling invariant,  we can suppose that $E$ is a  convex set  with  $P(E)=P(B)$.
We fix now $\varepsilon >0$.
%
Using Lemma \ref{3.8},  we can  suppose  that 
there exists $v \in W_{1,\infty}(\S^{n-1})$ with $||v||_{W_{1,\infty}}< \varepsilon$ such that
$$
\pa E=\{ x(1+v(x)), \, x\in \S^{n-1} \}.
$$
Then,  if we take $ \varepsilon$  small enough, by Theorem \ref{fuglede},  we obtain
$$
\mathcal D(E)\geq C(n)\displaystyle ||v||^2_{W^{1,2}(\S^{n-1})}.
$$ 
Let $F=\lambda E$, with $\lambda$ such that $V(F)=V(B)$.
From the isoperimetric inequality, it follows that $\lambda >1$.
Since the quantity $I(E)$ is scaling invariant, we have that  $I(F)=I(E)$ and, from the definition of $F$, that
\begin{equation}
    \pa F= \{ \lambda x(1+ v(x)), x \in \S^{n-1} \} = \{  x(1+(\lambda -1 + \lambda v(x))), x \in \S^{n-1} \}.
\end{equation}
Using the definition of $\lambda$ , we obtain
$$
\lambda^n-1= \frac{V(B)}{V(E)}-1= \frac{\sum_{k=1}^n \binom{n}{k} \displaystyle\int_{\S^{n-1}}v^k\H^{k-1}}{V(E)}
$$
and, as a consequence, 
\begin{equation}
    \lambda -1= \frac{\sum_{k=1}^n \binom{n}{k} \displaystyle\int_{\S^{n-1}}v^k\H^{k-1}}{V(E)\sum_0^{n-1}\lambda^k}.
\end{equation}
Let now $h(x)=\lambda -1 +\lambda v(x)$. Note that $||h||_{W^{1,\infty}}< 2^n||v||_{W^{1,\infty}}$ and that $\lambda^{n}\in (1,2)$. Moreover, using H\"older inequality, it is easy to  check that
$$
||h||^2_{L^{2}(\S^{n-1})} \le 2^{n+2}||v||^2_{L^{2}(\S^{n-1})} \quad \text{ and }\quad ||D_\tau h||^2_{L^{2}(\S^{n-1})}\le2^{1/n}
||D_\tau v||^2_{L^{2}(\S^{n-1})}.
$$
Thus,
\begin{equation}
    \mathcal D(F)=\mathcal D(E) \geq C_2(n) ||h||^2_{W^{1,2}(\S^{n-1})} \geq 2^{-n-1}C_2(n) ||h||^2_{W^{1,2}(\S^{n-1})} .
\end{equation}
Let $g= (1+h)^n-1$. Then, since $V(F)=V(B)$, we have $\int_{\S^{n-1}} gd\H^{n-1}=0$ and, from the smallness assumption on $u$, we immediately have  $\frac{1}{2}|h|\le  |g| \le 2|h|$ and $\frac{1}{2}|Dh|\le  |Dg| \le 2|Dh|$. Now we have to distinguish the cases $n=3$ and $n \ge 4$ , since we are going  to apply the interpolation  Lemma \ref{interp} to $g$. In the case $n\geq 4$, we get
\begin{align*}
    ||h||_\infty \le 2 ||g||_\infty \le C(n)||D_\tau g||^{\frac{2}{n-1}}_{L^{2}(\S^{n-1})} ||D_\tau g||_{L^{\infty}(\S^{n-1})}^{\frac{n-3}{n-1}}
    \\
    \le C(n)||D_\tau h||_{L^{2}(\S^{n-1})}^{\frac{2}{n-1}} ||D_\tau h||_{L^{\infty}(\S^{n-1})}^{\frac{n-3}{n-1}}
    \le C(n)||D_\tau h||_{L^{2}(\S^{n-1})}^{\frac{2}{n-1}} ||h||_{L^{\infty}(\S^{n-1})}^{\frac{n-3}{2(n-1)}},
\end{align*}
where in the last inequality we use \eqref{utileq}. From the above chain of inequalities we deduce
$$
||h||_{L^{\infty}} ^{\frac{n+1}{2}} \le C(n) ||D_\tau h||_{L^{2}(\S^{n-1})}^2
$$
and finally, recalling that $F=\lambda E$ and $V(F)=V(B)$, we get
\begin{multline}
\mathcal D(E) \geq C_n||D_\tau h||_{L^{2}(\S^{n-1})}^2 \geq C_n||h||_{L^{\infty}} ^{\frac{n+1}{2}}= C_n d_\H (F,B)^{\frac{n+1}{2}}=C_{n}\left( \frac{d_\H (E,E^{\sharp})}{V(E)^\frac{1}{n}}\right)^{\frac{n+1}{2}}.
\end{multline}
So, using  \eqref{distsharp} and  the isoperimetric inequality, we obtain the desired result \eqref{quantisharp} in the case $n\geq 4$.  We proceed in an analogous way in the case $n=3$.
Firstly we observe that, by definition of $h$, there exists a positive constant $A$ such that $||v ||_{W^{1,1}(\mathbb{S}^{n-1})}\leq A ||h||_{W^{1,1}(\mathbb{S}^{n-1})}.$ 
Then,  the upper bound in \eqref{quantisharp} in terms of $h$, can be written as follows\begin{equation}\label{Wh}
\mathcal{D}(E)=\mathcal{D}(F)\leq \bar{C}||h||_{W^{1,1}(\mathbb{S}^{n-1})},
\end{equation}
with $\bar{C}$ positive costant depending on the dimension.
 Applying Lemma \ref{interp} to $g$ and using Lemma \ref{utile}, we obtain:
	\begin{multline*}
	|| h||^2_\infty\leq 4 || g||^2_\infty\leq 16 ||D_{\tau}g||^2_{L^2(\mathbb{S}^{n-1})}\log\left[ \dfrac{8e||D_{\tau}g ||_\infty^2}{||D_{\tau} g||^2 _{ L^2(\mathbb{S}^{n-1}) }}   \right]\\\leq 64 ||D_{\tau}h||^2_{L^2(\mathbb{S}^{n-1})}\log\left[ \dfrac{32e||D_{\tau}h ||_\infty^2}{||D_{\tau} g||^2 _{ L^2(\mathbb{S}^{n-1}) }}   \right]\leq 64 ||D_{\tau}h||^2_{L^2(\mathbb{S}^{n-1})}\log\left[ \dfrac{C \;||h ||_\infty}{||D_{\tau} g||^2 _{ L^2(\mathbb{S}^{n-1}) }}   \right].
	\end{multline*}
	Choosing now $||h||_\infty$ small enough, from the upper bound in  \eqref{fuglede1}, we have 
	\begin{equation}\label{first}
	|| h||^2_{\infty}\leq 64 ||Dh||^2_{L^2(\mathbb{S}^{n-1})} \log\left[\dfrac{1}{\mathcal{D}(E)}\right],
	\end{equation}
	and, as a consquence, using \eqref{fuglede1} and \eqref{first},
	\begin{equation*}
	\mathcal{D}(E)\log\left(\dfrac{1}{\mathcal{D}(E)}\right)\geq C_1(n) ||D_{\tau} h||_{L^2(\mathcal{S}^{n-1})}\log\left(\dfrac{1}{\mathcal{D}(E)}\right)\geq C ||h||^2_\infty \dfrac{\log\left(\dfrac{1}{\mathcal{D}(E)}\right)}{\log\left(\dfrac{1}{\mathcal{D}(E)}\right)}=C ||h||^2_\infty.
	\end{equation*}
\end{proof}\subsection{Optimality issue.}		

In this section we will show the sharpness of inequality \eqref{quantisharp} and, as a consequence,  the sharpness for  the exponent of the quantitative Weinstock inequality. We start by taking into exam the case  $n=3$.
\begin{thm}\label{sharpness3}
	Let $n= 3$. There exists a family of convex sets $\{E_\alpha \}_{\alpha>0}$ such that for every $\alpha$
	\begin{equation*}
	\mathcal{	D}(E_\alpha )\rightarrow 0,\quad {\rm when } \;\alpha\rightarrow 0
	\end{equation*}
	and 
	\begin{equation}\label{sharpy}
	\mathcal{A}_{\mathcal{H}}(E_\alpha)= C \sqrt{  \mathcal D(E_\alpha)\log\frac{1}{\mathcal D(E_\alpha)}}
	\end{equation}
	where $C$ is a suitable positive constant independent of $\alpha$.
\end{thm}
\begin{proof}
	We follow the idea contained in \cite{f} (Example $3.1$) and  recall it here for the convenience of the reader.  Let $\alpha\in(0,\pi/2)$ and consider the following function $\omega=\omega(\varphi)$ defined over $\mathbb{S}^2$ and depending only on the spherical distance $\varphi$, with $\varphi\in[0,\pi]$, from a prescribed north pole $\xi^*\in\mathbb{S}^2$:
	\begin{eqnarray}
	\omega=\omega(\varphi)=
	\begin{cases}
	
	-\sin^2\alpha \log\left(\sin \alpha\right)+\sin\alpha\left(\sin\alpha-\sin\varphi\right)&{\rm for}\; \sin\varphi\leq \sin\alpha
	\\[.2cm]
	-\sin^2(\alpha)\log\left(\sin\varphi\right)\,\, & {\rm for}\; \sin\varphi\geq \sin\alpha.
	\end{cases}
	\end{eqnarray}
	Let $g:=\omega-\bar{\omega}$, with $\bar{\omega}$ the mean value of $\omega$, i.e. 
	\begin{equation*}
	\bar{\omega}=\int_{0}^{\pi/2} \omega(\varphi)\sin\varphi\;d\varphi =\left(1-\log 2\right)\alpha^2+O(\alpha^3), 
	\end{equation*}
	when $\alpha$ goes to  $0$, and let
	$$R:=\left( 1+3g  \right)^{1/3}=1+h. $$
	The  $C^1$ function $R=R(\varphi)$ determines  in polar coordinates $(R,\varphi) $ a planar curve. We rotate this curve about the line $\xi^*\mathbb{R}$, determining in this way the boundary of a convex and bounded set, that we call $E_\alpha$.
	We can observe that $h$ and $g$ are the same fuctions cointained in the proof of Theorem \ref{main2}. The set $E_\alpha$ is indeed a nearly spherical set, which has $h$ as a representative function and with  $V(E_\alpha)=V(B)$.
	Therefore, taking into account the computations contained in the proof of Theorem \ref{main2} relative to the functions $h$ and $g$ and the ones contained in  \cite{f} combined with \eqref{fuglede1}, we have
	\begin{equation}
	||g||_\infty=\alpha^2\log\dfrac{1}{\alpha}+O(\alpha^2),
	\end{equation}
	\begin{equation}\label{sharpy1}
	||h||_\infty\geq\dfrac{1}{2}||g||_\infty=\dfrac{1}{2}\alpha^2\log\dfrac{1}{\alpha}+O(\alpha^2),
	\end{equation}
	and
	\begin{equation*}
	|| \nabla h||^2_2=||\nabla g ||^2_2=\alpha^4\log\left(\dfrac{1}{\alpha}\right)+O(\alpha^4).
	\end{equation*}
Using \eqref{Wh}, we obtain:
	\begin{equation}
	\mathcal{D}(E_\alpha)=O\left(\alpha^4\log\dfrac{1}{\alpha}\right)
	\end{equation}
Consequently
	\begin{equation} \label{deficitsharp}
	\mathcal{D}(E_\alpha)\log\left( \dfrac{1}{\mathcal{D}(E_\alpha)}\right)=O\left(\alpha^2\log\dfrac{1}{\alpha}\right)^2.
	\end{equation}
So, we have that $\mathcal{D}(D_\alpha)\rightarrow0$ as $\alpha$ goes to $0$ and, combining \eqref{sharpy1} with \eqref{deficitsharp}, we have  the validity of \eqref{sharpy}.
\end{proof}
We show now the sharpness of the quantitative Weinstock inequality in dimension $n\geq4$.
\begin{thm}\label{sharpness4}
	Let $n\geq 4$. There exists a family of convex sets $\{P_\alpha \}_{\alpha>0}$ such that 
	\begin{equation*}
	\mathcal{	D}(P_\alpha )\rightarrow 0,\quad {\rm when } \;\alpha\rightarrow 0
	\end{equation*}
	and 
	\begin{equation*}
	\mathcal{A_{\mathcal{H}}}(P_\alpha)\geq C(n) \left( \mathcal{D}(P_\alpha)\right)^{2/(n+1)},
	\end{equation*}
	where $C(n)$ is a suitable positive  constant. 
\end{thm}
\begin{proof}
	In this proof we follow the costruction given in \cite{f} (Example $3.2$). Let $\alpha\in]0,\pi/2[$ and let  $P_\alpha$ be the convex hull of $B\cup\{-p,p\}$, where $p\in\mathbb{R}^n$ is given by
	$$|p|=\dfrac{1}{\cos \alpha}. $$ 
	We have that
	\begin{equation*}\label{volumesharp}
	V(P_\alpha)=\omega_n+\dfrac{2}{n(n+1)}\omega_{n-1}\alpha^{n+1}+O(\alpha^{n+3})
	\end{equation*}
	and 
	\begin{equation*}\label{surfacesharp}
	P(P_\alpha)=n V(P_\alpha).
	\end{equation*}
	We provide here the computation of the boundary momentum, that is
	\begin{equation}
	W(P_\alpha)=\dfrac{2\omega_{n-1}}{n(n+1)}\dfrac{\left(\sin(\alpha)\right)^{(n-1)}}{\cos(\alpha)}    \left(n^2+n+2\tan^2(\alpha)\right)+2(n-1)\left[\dfrac{\sqrt{\pi}\;\Gamma\left(\dfrac{n-1}{2}\right)}{2\Gamma\left(\dfrac{n}{2}\right)}-\int_{0}^{\alpha}\sin^{n-2}(\theta)\;d\theta\right].                              
	\end{equation}
As a consequence, we have 
	\begin{equation*}
	(n\omega_n)^\frac{1}{n-1}V(P_\alpha)P(P_\alpha)^\frac{1}{n-1}\mathcal{D}(P_\alpha)=\left(n\omega_n\right)^{\frac{1}{n-1}} \dfrac{2\omega_{n-1}}{n+1}\dfrac{(n-2)}{n (n-1)}\alpha^{n+1}+o(\alpha^{n+3}).
	\end{equation*}
	Since $\mathcal{A_{\mathcal{H}}}(D_\alpha)$ behaves asimptotically as $\alpha^2$, we have proved the desired claim.
\end{proof}
\section{The planar case}
In this section we discuss the stability of  the isoperimetric  inequality \eqref{iso_new} in the plane. In $\R^2$ we cannot use the same arguments   used in higher dimensions to obtain a stability result for \eqref{iso_new}. Moreover, we observe that,  in   two dimension, the  inequality \eqref{Weinstock} contained in \cite{bfnt}  and  the inequality \eqref{iso_new} are proved by  Weinstock in \cite{We}, using   the representation of a two dimensional convex set via its support function.  More precisely, let $E \subset \R^2$ be an open, smooth, convex set in the plane containing the origin and let $h(\theta)$ be the support function of $E$ with $\theta \in [0,2 \pi]$.   Weinstock proved in \cite{We} the following inequality (see also  \cite{bfnt} for  details)  
\begin{equation}
\label{we_st}
	\pi W(E)-P(E)V(E)\geq\frac{P(E)}{2}\int_{0}^{2\pi}p^2(\theta)\:d\theta \ge 0,
	\end{equation}
	where, for every $\theta\in[0,2\pi]$,  $p(x)$ is defined by	\begin{equation*}
	h(\theta)=\frac{P(E)}{2\pi}+p(\theta).
	\end{equation*}
	By the definition of support function,  it holds
 \begin{gather}
 \label{supp}
 \int_{0}^{2\pi} h(\theta)\;d\theta= P(E).
 \end{gather} 	
 Moreover, since  $E$ is convex, we have 
 \begin{gather}
 \label{supp2}
h(\theta)+h''(\theta)\ge0 .
 \end{gather} 
 Then, the function $p$ verifies 
 $$ \int_{0}^{2\pi} p(\theta)\;d\theta=0,$$
 and 
  \begin{gather}
 \label{supp3}
\frac{P(E)}{2\pi}+p(\theta)+p''(\theta)\ge0 .
 \end{gather}
 We observe that 
\begin{equation}
\label{linf}
\|p\|_{L^\infty([0,2\pi])}=d_{\mathcal H}(E,E^*),
\end{equation}
where $E^*$ is the disc centered at the origin having the same perimeter as $E$.
 Consider $\theta_{0} \in [0,2\pi]$ such that $\|p\|_{L^{\infty}}=p(\theta_{0})$.
By using property \eqref{supp3}, it is not difficult to prove the following result.
 \begin{prop}
 \label{2dim}
 Let $p$ be as above,  then 
 \begin{equation}
 \label{par1}
  p(\theta)\ge\gamma(\theta),
 \end{equation}
 where $\gamma(\theta):=p(\theta_0)-\displaystyle\frac{1}{2}\displaystyle \left(\frac{P(E)}{2\pi}+p(\theta_{0}))\right)\left(\theta-\theta_{0}\right)^{2}$ is a parabola which vanishes at the following points 
 \[ 
	 \theta_{1,2} =\theta_{0}\pm\sqrt{\displaystyle \frac{2p(\theta_0)}{\frac{P(E)}{2\pi}+p(\theta_{0})}}.
	 \]  
 \end{prop}
 \begin{proof}
 By property \eqref{supp3}, we obtain
	 \begin{multline}
	 p(\theta)=p(\theta_{0})+\int_{\theta_{0}}^{\theta} p'(t) \,dt = p(\theta_{0})+\int_{\theta_{0}}^{\theta}\int_{\theta_{0}}^{t} p''(s) \,ds \, dt \\\ge p(\theta_{0})+\int_{\theta_{0}}^{\theta}\int_{\theta_{0}}^{t} -\left( \frac{P(E)}{2\pi}+p(s)\right) \,ds \,dt\\\ge p(\theta_{0})- \left( \frac{P(E)}{2\pi}+p(\theta_{0})\right) \frac{\left(\theta-\theta_{0}\right)^{2}}{2},
	 \end{multline}
	  which is the claim. 
	Then, $p$ is above the parabola $\gamma$,  that attains its zeros at the following points:
	\[
	 \theta_{1,2} =\theta_{0}\pm\sqrt{\displaystyle \frac{2p(\theta_0)}{\frac{P(E)}{2\pi}+p(\theta_{0})}}.
	 \]
This concludes the proof.
 \end{proof}
	Inequality \eqref{we_st} implies Weinstock inequality but it  hides also a stability result. Indeed, by using the previous Proposition,   we get the following quantitative Weinstock inequality in the plane.
	\begin{thm}
	\label{two}
	There exist $\delta$ and a positive constant $C$ such that, if  $E \subset \R^2$ is a bounded,  open, convex with $\mathcal D(E)\le \delta$, then
	  \[ C\mathcal{A}_{\mathcal{H}}(E)^{\frac{5}{2}} \le  \mathcal D(E),\]
	  where $\mathcal{A_\mathcal{H}}(E) $  and $\mathcal D(E)$ are  defined  in  \eqref{asy} and \eqref{def} respectively.
	  Moreover,  the exponent $\frac{5}{2}$ is sharp.
	\end{thm}
	\begin{proof}
Since the functional $\mathcal{D}$ is scaling invariant,  we can assume that $E$ is a strictly convex set of  finite measure with  $P(E)=P(B)=2\pi$. From Lemma \ref{3.8}, if we take a sufficiently small $\varepsilon$, there exists $\delta>0$ such that, if  $\mathcal{D}(E)\le \delta$, then $E$ contains the origin, its boundary can be parametrized  as above by means the support function and, by \eqref{linf}, 
	\[
	d:=\|p\|_{L^{\infty}([0,2\pi])} \le \varepsilon.
	\]
Under these assumptions, since  in particular $|d|<\frac{1}{2}$,  Proposition \ref{2dim} gives
\begin{equation}
\label{app2}
p(\theta) \ge d-\left(\frac{1+d}{2}\right)(\theta-\theta_0)^2 \ge d -\frac{(\theta-\theta_0)^2
}{4}.\end{equation}
	Denoting  by $\theta_{1,2}$ the zeros of the parabola $d-\frac{(\theta-\theta_0)^2
}{4}$, that are
\[
\theta_{1,2}=\theta_0\pm2\sqrt{d},
\]
by using  \eqref{we_st}, the isoperimetric inequality,  H\"older inequality and  \eqref{app2},   we get
	\begin{multline}
	\mathcal D(E)=\frac{W(E)}{P(E)V(E)}-\frac{1}{\pi}=\displaystyle \frac{\pi W(E)-P(E)V(E)}{\pi P(E)V(E)} \ge \frac{1}{2\pi^2 }\int_{0}^{2\pi}p^2(\theta)\,d\theta \\ >\frac{1}{2\pi^2}\int_{\theta_{1}}^{\theta_{2}}p^2(\theta)\,d\theta\ge\frac{1}{2\pi^2(\theta_{2} -\theta_{1})}\left(\int_{\theta_{1}}^{\theta_{2}}p(\theta)\,d\theta\right)^{2}>\frac{16}{9\pi^2}d^{\frac{5}{2}}.
	\end{multline}
By \eqref{linf} and \eqref{asy}, being $P(E)=2\pi$, we get the claim.	

				In order to conclude the proof, we have to show the sharpness of the exponent. We construct a family of strictly convex sets $E_{\varepsilon}$, with $P(E_{\varepsilon})=2\pi$,  such that 
\[ \mathcal{D}(E_{\varepsilon}) \to 0 \text{ for } \varepsilon \to 0,\]
and
\[
\mathcal{A}_{\mathcal{H}}(E_{\varepsilon})= C \varepsilon^{\frac{5}{2}}.
\]
Let us consider the convex set $E$ having the following support function:
		\[h(\theta) = 1+p(\theta), \quad \theta \in [0,2\pi],\]		
where the function $p$ is the following
\[
p(\theta)=\begin{cases}
-b &\text{if } \theta \in [0,\alpha_1]\\
\varepsilon-\frac{(\theta-\pi)^2}{4} &\text{if } \theta \in[\alpha_1,\alpha_2]\\
-b &\text{if } \theta \in [\alpha_2,2\pi].
\end{cases} 
\]
Here the parameters $b$, $\alpha_1$ and $\alpha_2$ are
\[
 b=\frac{4}{3}\displaystyle \frac{\varepsilon^{\frac{3}{2}}}{\pi-2\sqrt{\varepsilon}}, \quad \alpha_1=\pi-2\sqrt{\varepsilon}, \quad \alpha_2=\pi+2\sqrt{\varepsilon}.
\]
By construction, we have that 
\[
P(E_{\varepsilon})= 2\pi \,\,\,\text{ and } \,\,\,\int_0^{2\pi}p(\theta)\, d\theta=0.
\]
We  recall that (see for instance \cite{We,We2,bfnt})
\[\begin{cases}
V(E_{\varepsilon})=\displaystyle\frac{1}{2}\displaystyle\int_0^{2\pi}\left(h^2(\theta)+h(\theta)h''(\theta)\right)\,d\theta \\[.2cm]
W(E_{\varepsilon})=\displaystyle\int_0^{2\pi}\left(h^3(\theta)+\frac{1}{2}h^2(\theta)h''(\theta)\right)\,d\theta.
\end{cases}
\]
Arguing as in the proof of  Weinstock inequality, a simple calculation gives
\begin{multline}
\pi W(E)-P(E)V(E)=\pi \displaystyle\int_0^{2\pi} p^2(\theta) \left(2+p(\theta) + \frac{1}{2}p''(\theta))\right)\,d\theta=\\2\pi \displaystyle\int_0^{2\pi}p^2(\theta)\,d\theta +\pi \displaystyle\int_0^{2\pi}p^3(\theta)\,d\theta+\displaystyle \frac{\pi}{2}\displaystyle\int_0^{2\pi}p^2(\theta)p''(\theta)\, d\theta = C \varepsilon^{\frac{5}{2}}+O(\varepsilon^{3}),
\end{multline}
where $C$ is a positive constant. This concludes the proof.
\end{proof}

\section{Proof of Theorem \ref{main_w}}

The proof is a consequence of  Theorems \ref{main2} and \ref{two}.
Since all the quantities involved are invariant under translations, we can assume that $\partial \Omega$ has the origin as barycenter. 


Under this assumption in  \cite{bfnt} it is proved that
\begin{equation}\label{ineq}
\sigma(\Omega)\leq\dfrac{n V(\Omega)}{W(\Omega)}.
\end{equation}
By \eqref{ball},  it holds
\begin{equation}\label{ball}
	\sigma(B_{R}(x))=\frac{1}{R}=\left[\dfrac{n\omega_n}{P(\Omega)}\right]^{1/(n-1)},
\end{equation}
 then, using the previous inequality and \eqref{nuovo_fun}, we have

\begin{equation}
\label{pripas}
		 \dfrac{\sigma(B_{R}(x))-\sigma(\Omega)}{\sigma(\Omega)}=\dfrac{\sigma(B_{R}(x))}{\sigma(\Omega)}-1\geq \dfrac{W(\Omega)}{n V(\Omega)}\left(\dfrac{n\omega_n}{P(\Omega)}\right)^{1/(n-1)}-1={\frac{(n\omega_n)^{\frac{1}{n-1}}}{n}}\mathcal D(\Omega).
\end{equation}
Let $\delta $  be as in Theorem \ref{main2}. Then if $\Omega$ is such that $\sigma_{1}(B_{R}(x))\le(1+\bar\delta)\sigma_{1}(\Omega)$, with $\bar \delta= \frac{(n\omega_n)^{\frac{1}{n-1}}}{n}\delta $  then  $\mathcal D(\Omega)\le\delta $  and, for $n\ge 4$ from \eqref{quantisharp} in Theorem \ref{main2}, we get 
\[
		 \dfrac{\sigma(B_{R}(x))-\sigma(\Omega)}{\sigma(\Omega)}\ge \\C(n)(\mathcal{A}_{\mathcal{H}}(\Omega))^{\frac{n+1}{2}}. 
\]
 If $n=3$,  we can conclude  a similar way,  observing that  $f(t)= t \log\left(\frac{1}{t}\right)$ is invertible for $0<t<e^{-1}$. Thus, being $D(\Omega)$ small, we can explicit it in \eqref{quantisharp}, obtaining the thesis. The result in two dimension follows from Theorem \ref{two}.

\end{document}